\newtheorem{theorem}{Theorem}[section]
\newtheorem{proposition}[theorem]{Proposition}
\newtheorem{corollary}[theorem]{Corollary}
\newtheorem{definition}[theorem]{Definition}
\numberwithin{equation}{section}
\begin{document}

\baselineskip=15pt

\title[Chern classes of parabolic vector bundles]{A construction
of Chern classes of parabolic vector bundles}

\author[I. Biswas]{Indranil Biswas}

\address{School of Mathematics, Tata Institute of Fundamental
Research, Homi Bhabha Road, Bombay 400005, India}

\email{indranil@math.tifr.res.in}

\author[A. Dhillon]{Ajneet Dhillon}

\address{Department of Mathematics, University of Western Ontario,
London, Ontario N6A 5B7, Canada}

\email{adhill3@uwo.ca}

\subjclass[2000]{14F05, 14C15}

\keywords{Parabolic bundle, Chern class, equivariant Chow group,
ramified bundle}

\date{}

\begin{abstract}
Given a parabolic vector bundle, we construct for it of projectivization
and tautological line bundle. These are analogs of the projectivization
and tautological line bundle for an usual vector bundle. Using these
we give a construction of the parabolic Chern
classes.
\end{abstract}

\maketitle

\section{Introduction}

Parabolic vector bundles on a smooth complex projective curve were introduced
in \cite{Se}. In \cite{MY}, Maruyama and Yokogawa introduced parabolic vector
bundles on higher dimensional complex projective varieties. The notion of Chern
classes of a vector bundle extends to the context of parabolic
vector bundles \cite{Bi-c}, \cite{IS}, \cite{Ta}.

Take a vector bundle $V$ of rank $r$ on a variety $Z$. Let $\psi\, :\, 
{\mathbb P}(V)\,\longrightarrow\, Z$ be the projective bundle parametrizing
hyperplanes in the fibers of the vector bundle $V$.
The tautological line bundle on 
${\mathbb P}(V)$ will be denoted by ${\mathcal O}_{{\mathbb P}(V)}(1)$.
One of the standard ways of constructing Chern classes of
$V$ is to use the identity
$$
\sum_{i=0}^r (-1)^i c_1({\mathcal O}_{{\mathbb P}(V)}(1))^{r-i}
\psi^*c_i(V)\,=\, 0
$$
with $c_0(V)\,=\, 1$.

Our aim here is to give a construction of Chern classes of
parabolic vector bundles along this line (see Theorem \ref{thm1}).

N. Borne showed that parabolic vector bundles can be understood as
vector bundles on root-stacks \cite{Bo1}, \cite{Bo2}. In terms of this
correspondence, the $i$-th parabolic Chern class of a parabolic vector
bundle is the usual $i$-th Chern class of the corresponding vector bundle
on root-stack. It should be mentioned that this elegant correspondence
in \cite{Bo1}, \cite{Bo2} between parabolic vector bundles and vector
bundles on root-stacks is turning out to be very useful
(see, for example, \cite{BD} for application of this correspondence).

\section{Preliminaries}

\subsection{Parabolic vector bundles}

Let $X$ be an irreducible smooth projective variety defined over
$\mathbb C$. Let $D\, \subset\, X$ be an effective reduced divisor
satisfying the condition that each irreducible component
of $D$ is smooth, and the irreducible components of $D$ intersect
transversally; divisors satisfying these conditions are called
simple normal crossing ones.
Let
\begin{equation}\label{decomp.-D}
D\, =\, \sum_{i=1}^\ell D_i
\end{equation}
be the decomposition of $D$ into irreducible components.

Let $E_0$ be an algebraic vector bundle over $X$. For
each $i\,\in\, [1\, , \ell]$, let
\begin{equation}\label{divisor-filt.}
E_0\vert_{D_i}\, =\, F^i_1 \,\supsetneq\, F^i_2 \,\supsetneq\,
\cdots \,\supsetneq\, F^i_{m_i} \,\supsetneq\, F^i_{m_i+1}\, =\, 0
\end{equation}
be a filtration by subbundles of the restriction of $E_0$ to $D_i$.

A \textit{quasiparabolic} structure on $E_0$ over $D$ is a filtration
as above of each $E_0\vert_{D_i}$ such that the system of filtrations
is locally abelian (see \cite[p. 157, Definition 2.4.19]{Bo2} for the 
definition of a locally abelian structure).

For a quasiparabolic structure as above, \textit{parabolic 
weights} are a collection of rational numbers
$$
0\, \leq\, \lambda^i_1\, < \, \lambda^i_2\, < \,
\lambda^i_3 \, < \,\cdots \, < \, \lambda^i_{m_i}
\, <\, 1\, ,
$$
where $i\,\in\, [1\, ,\ell]$. The parabolic weight $\lambda^i_j$
corresponds to the subbundle $F^i_j$ in \eqref{divisor-filt.}. A 
\textit{parabolic structure} on $E_0$ is a quasiparabolic
structure on $E_0$ (defined as above) equipped with parabolic weights.
A vector bundle over $X$ equipped
with a parabolic structure on it is also called a
\textit{parabolic vector bundle}. (See \cite{MY}, \cite{Se}.)
For notational convenience, a parabolic vector bundle defined
as above will be denoted by $E_*$.

The divisor $D$ is called the \textit{parabolic divisor} for
$E_*$. We fix $D$ once and for all. So the parabolic divisor
of all parabolic vector bundles on $X$ will be $D$.

The definitions of direct sum, tensor product and dual of vector
bundles extend naturally to parabolic vector bundles; similarly,
symmetric and exterior powers of parabolic vector bundles are also
constructed (see \cite{MY}, \cite{Bi2}, \cite{Yo}).

\subsection{Ramified principal bundles}

The complement of $D$ in $X$ will be denoted by $X-D$.

Let
$$
\varphi\, :\, E_{\text{GL}(r, {\mathbb C})}\, \longrightarrow\, X
$$
be a ramified principal $\text{GL}(r, {\mathbb C})$--bundle
with ramification over $D$
(see \cite{BBN}, \cite{Bi2}, \cite{Bi3}). We
briefly recall its defining properties.
The total space $E_{\text{GL}(r, {\mathbb C})}$ is a
smooth complex variety equipped with
an algebraic right action of $\text{GL}(r, {\mathbb C})$
\begin{equation}\label{f}
f\, :\,E_{\text{GL}(r, {\mathbb C})}\times \text{GL}(r,
{\mathbb C})\, \longrightarrow\, E_{\text{GL}(r, {\mathbb C})}\, ,
\end{equation}
and the following conditions hold:
\begin{enumerate}
\item{} $\varphi\circ f \, =\, \varphi\circ p_1$, where $p_1$ is
the natural projection of $E_{\text{GL}(r, {\mathbb C})}\times
\text{GL}(r, {\mathbb C})$ to $E_{\text{GL}(r, {\mathbb C})}$,

\item{} for each point $x\, \in\, X$, the action of
$\text{GL}(r, {\mathbb C})$ on the
reduced fiber $\varphi^{-1}(x)_{\text{red}}$ is transitive,

\item{} the restriction of $\varphi$ to $\varphi^{-1}(X
- D)$ makes $\varphi^{-1}(X- D)$ a principal
$\text{GL}(r, {\mathbb C})$--bundle over $X- D$,

\item{} for each irreducible component $D_i\, \subset\, D$,
the reduced inverse image $\varphi^{-1}(D_i)_{\text{red}}$ is a
smooth divisor and
$$
\widehat{D}\, :=\, \sum_{i=1}^\ell \varphi^{-1}(D_i)_{\text{red}}
$$
is a normal crossing divisor on $E_{\text{GL}(r, {\mathbb C})}$, and

\item{} for any point $x$ of $D$, and any point
$z\, \in\, \varphi^{-1}(x)$, the isotropy
group
\begin{equation}\label{e8}
G_z\, \subset\,\text{GL}(r, {\mathbb C}) \, ,
\end{equation}
for the action of $\text{GL}(r, {\mathbb C})$ on
$E_{\text{GL}(r, {\mathbb C})}$, is a finite group, and if
$x$ is a smooth point of $D$, then the natural action of 
$G_z$ on the quotient line $T_zE_{\text{GL}(r,
{\mathbb C})}/T_z\varphi^{-1}(D)_{\text{red}}$ is faithful.
\end{enumerate}

Let
$$
D^{\rm sm}\, \subset\, D
$$
be the smooth locus of the divisor. Take any point $x\, \in\,D^{\rm sm}$,
and let $z\, \in\, \varphi^{-1}(x)$ be any point.
Since $G_z$ acts faithfully on the line
$T_zE_{\text{GL}(r, {\mathbb C})}/T_z\varphi^{-1}(D)_{\text{red}}$,
it follows that $G_z$ is a cyclic group.
Take any $z'\,\in\, E_{\text{GL}(r, {\mathbb C})}$ such that
$\varphi(z')\, =\, \varphi(z)$. There is an element $g\,\in\,
\text{GL}(r, {\mathbb C})$ such that $f(z\, ,g)\,=\, z'$. Therefore,
the subgroup $G_z$ is conjugate to the subgroup $G_{z'}$; more
precisely, we have $g^{-1}G_zg\,=\, G_{z'}$. In particular,
$G_z$ is isomorphic to $G_{z'}$.

There is a natural bijective correspondence between the ramified
principal $\text{GL}(r, {\mathbb C})$--bundles with ramification
over $D$ and the parabolic vector bundles of rank $r$ (see
\cite{BBN}, \cite{Bi2}). We first describe the steps in the construction
of a ramified principal $\text{GL}(r, {\mathbb C})$--bundle from a
parabolic vector bundle of rank $r$:
\begin{itemize}
\item Given a parabolic vector bundle $E_*$ of rank $r$ on $X$, there is
a Galois covering
\begin{equation}\label{e1}
\gamma\, :\, Y\, \longrightarrow\, X\, ,
\end{equation}
where $Y$ is
an irreducible smooth projective variety, and a $\text{Gal}(\gamma)$--linearized
vector bundle $F$ on $Y$ \cite{Bi1}, \cite{Bo1}, \cite{Bo2}. Let 
$F_{\text{GL}(r, {\mathbb C})}$ be the principal $\text{GL}(r, {\mathbb 
C})$--bundle on $Y$ defined by $F$. We recall that $F_{\text{GL}(r,
{\mathbb C})}$ is the space of all linear isomorphisms from ${\mathbb C}^r$
to the fibers of $F$.

\item The linearization action of $\text{Gal}(\gamma)$ on $F$ produces an
action of $\text{Gal}(\gamma)$ on $F_{\text{GL}(r, {\mathbb C})}$. This
action of $\text{Gal}(\gamma)$ on $F_{\text{GL}(r, {\mathbb C})}$ commutes
with the action of $\text{GL}(r, {\mathbb C})$ on $F_{\text{GL}(r, {\mathbb 
C})}$ because it is given by a linearization action on $F$.

\item The quotient $$\text{Gal}(\gamma)\backslash F_{\text{GL}(r, {\mathbb C})}
\, \longrightarrow\, \text{Gal}(\gamma)\backslash Y\,=\, X$$
is a ramified principal $\text{GL}(r, {\mathbb C})$--bundle.
\end{itemize}
It is starightforward to check that
$\text{Gal}(\gamma)\backslash F_{\text{GL}(r, {\mathbb C})}$ is a 
ramified principal $\text{GL}(r, {\mathbb C})$--bundle over $X$.

We will now describe the construction of a parabolic vector bundles of rank
$r$ from a ramified principal $\text{GL}(r, {\mathbb C})$--bundle.

Let
$$
\varphi \, :\, F_{\text{GL}(r, {\mathbb C})}\, \longrightarrow\, X
$$
be a ramified principal $\text{GL}(r, {\mathbb C})$--bundle. Let
$f$ be as in \eqref{f}. Consider the trivial vector bundle
$$
W\, :=\, F_{\text{GL}(r, {\mathbb C})}\times {\mathbb C}^r\,\longrightarrow\,
F_{\text{GL}(r, {\mathbb C})}\, .
$$
The group $\text{GL}(r,{\mathbb C})$ acts on $F_{\text{GL}(r, {\mathbb C})}
\times {\mathbb C}^r$ as follows: the action of any $g\, \in\,
\text{GL}(r,{\mathbb C})$ sends any $(z\, ,v)\, \in\, F_{\text{GL}(r,
{\mathbb C})} \times {\mathbb C}^r$ to $(f(z,g)\, ,g^{-1}(v))$. Note that
this action on $W$ is a lift of the action of $\text{GL}(r,{\mathbb C})$ on
$F_{\text{GL}(r, {\mathbb C})}$ defined by $f$. This action
of $\text{GL}(r,{\mathbb C})$ on $W$ produces an action of
$\text{GL}(r,{\mathbb C})$ on the quasicoherent sheaf
$\varphi_*W$ on $X$. Note that this action commutes with the trivial action
of $\text{GL}(r,{\mathbb C})$ on $X\,=\,F_{\text{GL}(r,
{\mathbb C})}/\text{GL}(r,{\mathbb C})$.

The vector bundle underlying the parabolic vector bundle corresponding to
$F_{\text{GL}(r, {\mathbb C})}$ is
$$
E_0\,:=\, (\varphi_*W)^{\text{GL}(r,{\mathbb C})}\, \subset\,
\varphi_*W\, .
$$
Here $(\varphi_*W)^{\text{GL}(r,{\mathbb C})}$ denotes the sheaf of
invariants; from the given conditions on $F_{\text{GL}(r, {\mathbb C})}$ it
follows that $E_0$ is a locally free coherent sheaf. We will construct
a parabolic structure on $E_0$.

For any $i\, \in\,[1\, ,\ell]$, the reduced divisor $\varphi^{-1}(D_i)_{\text{red}}
\, \subset\, F_{\text{GL}(r, {\mathbb C})}$ is preserved by the action
of $\text{GL}(r,{\mathbb C})$ on $F_{\text{GL}(r, {\mathbb C})}$. Therefore,
the line bundle 
$$
{\mathcal O}_{F_{\text{GL}(r, {\mathbb C})}} (\varphi^{-1}(D_i)_{\text{red}})
\, \longrightarrow\, F_{\text{GL}(r, {\mathbb C})}
$$
is equipped with a lift of the action of $\text{GL}(r, {\mathbb C})$
on $F_{\text{GL}(r, {\mathbb C})}$. For each $n\,\in\, \mathbb Z$, the action on
$\text{GL}(r, {\mathbb C})$ on ${\mathcal O}_{F_{\text{GL}(r,
{\mathbb C})}} (\varphi^{-1}(D_i)_{\text{red}})$ produces an action of
$\text{GL}(r, {\mathbb C})$ on the line bundle ${\mathcal O}_{F_{\text{GL}(r,
{\mathbb C})}} (n\cdot \varphi^{-1}(D_i)_{\text{red}})$.

For any $i\, \in\,[1\, ,\ell]$, take any point $x_i\, \in\,
D^{\rm sm}\bigcap D_i$, where $D^{\rm sm}$ as before is the smooth
locus of $D$. Recall that the order of the cyclic isotropy subgroup
$G_z\, \in\, \text{GL}(r, {\mathbb C})$, where $z\, \in\, \varphi^{-1}(x_i)$,
is independent of the choices of both $x_i$ and $z$. Let $n_i$ be the
order of $G_z$, where $z$ is as above.

For any real number $\lambda$, by $[\lambda]$ we will denote the integral
part of $\lambda$. So, $[\lambda]\, \in\, \mathbb Z$, and
$0\, \leq\, \lambda-[\lambda]\, <\, 1$.

For any $t\, \in\, \mathbb R$, consider the vector bundle
$$
W_t\, :=\, W\otimes {\mathcal O}_{F_{\text{GL}(r, {\mathbb C})}} (
\sum_{i=1}^\ell [-tn_i]\cdot \varphi^{-1}(D_i)_{\text{red}})
\, \longrightarrow\, F_{\text{GL}(r, {\mathbb C})}\, ,
$$
where $n_i$ is defined above.
The actions of $\text{GL}(r, {\mathbb C})$ on $W$ and
${\mathcal O}_{F_{\text{GL}(r, {\mathbb C})}} (\varphi^{-1}(D_i)_{\text{red}})$
together produce an action of $\text{GL}(r, {\mathbb C})$ on the vector bundle
$W_t$ defined above. This action of $\text{GL}(r, {\mathbb C})$ on
$W_t$ lifts the action of $\text{GL}(r, {\mathbb C})$ on
$F_{\text{GL}(r, {\mathbb C})}$.

Let
$$
E_t\, :=\,  (\varphi_*W_t)^{\text{GL}(r,{\mathbb C})}\, \subset\,
\varphi_*W_t
$$
be the invariant direct image. This $E_t$ is a locally free coherent sheaf
on $X$. 

This filtration of coherent sheaves $\{E_t\}_{t\in \mathbb R}$ defines a parabolic
vector bundle on $X$ with $E_0$ as thr underlying vector bundle (see \cite{MY}
for the description of a parabolic vector bundles as a filtration
of sheaves). The proof
of it similar to the proofs in \cite{Bi1}, \cite{Bo1}, \cite{Bo2}.

The above construction of a parabolic vector bundle of rank
$r$ from a ramified principal ${\text{GL}(r, {\mathbb C})}
$--bundle is the inverse of the earlier construction of a
ramified principal ${\text{GL}(r, {\mathbb C})}$--bundle from
a parabolic vector bundle.

We note that the above construction of a parabolic vector bundle of rank $r$ from
a ramified principal $\text{GL}(r, {\mathbb C})$--bundle coincides with the
following construction (however we do not need this here for our purpose).

As before, let $F_{\text{GL}(r, {\mathbb C})}\, \longrightarrow\,
X$ be a ramified principal $\text{GL}(r, {\mathbb C})$--bundle.
Then there is a finite (ramified) Galois covering
$$
\gamma\, :\, Y\, \longrightarrow\, X
$$
such that the normalization $\widetilde{F_{\text{GL}(r, {\mathbb C})}
\times_X Y}$ of the fiber product
$F_{\text{GL}(r, {\mathbb C})}\times_X Y$ is smooth. The projection 
$\widetilde{F_{\text{GL}(r, {\mathbb C})}\times_X Y}
\,\longrightarrow\, Y$ is a principal $\text{GL}(r,
{\mathbb C})$--bundle equipped with an action of the Galois
group $\Gamma\,:=\,
\text{Gal}(\gamma)$. Let $F_{V_0}\,:=\, \widetilde{F_{\text{GL}(r, 
{\mathbb C})}\times_X Y}(V_0)$ be the vector bundle over
$Y$ associated to the principal $\text{GL}(r,
{\mathbb C})$--bundle $\widetilde{F_{\text{GL}(r, {\mathbb C})}
\times_X Y}$ for the standard $\text{GL}(r, {\mathbb C})$--module
$V_0 \,:=\, {\mathbb C}^r$. The action of $\Gamma$
on $Y$ induces an action of $\Gamma$ on $\widetilde{F_{\text{GL}(r,
{\mathbb C})}\times_X Y}$; this action of $\Gamma$ on
$\widetilde{F_{\text{GL}(r, {\mathbb C})}\times_X Y}$ 
commutes with the action of
${\text{GL}(r, {\mathbb C})}$ on $\widetilde{F_{\text{GL}(r, 
{\mathbb C})}\times_X Y}$. Hence the action of $\Gamma$ on
$\widetilde{F_{\text{GL}(r, {\mathbb C})}\times_X Y}$ induces 
an action of $\Gamma$ on the above defined associated bundle
$F_{V_0}$ making $F_{V_0}$ a $\Gamma$--linearized vector
bundle. Let $E_*$ be the parabolic vector bundle of rank $r$
over $X$ associated
to this $\Gamma$--linearized vector bundle $F_{V_0}$.

Take an irreducible component $D_i$ of the parabolic divisor $D$.
Consider the parabolic vector bundle $E_*$ constructed above from
the ramified principal $\text{GL}(r, {\mathbb C})$--bundle
$F_{\text{GL}(r, {\mathbb C})}\, \longrightarrow\, X$. A rational
number $0\, \geq\, \lambda\, <\, 1$ is a parabolic weight for the
quasiparabolic filtration of $E_*$ over $D_i$ if and only if
$\exp(2\pi\sqrt{-1} \lambda)$ is an eigenvalue of the isotropy subgroup
$G_z$ for a general point $z$ of $D_i$; if $\lambda$ is a parabolic
weight, then its multiplicity coincides with the multiplicity of the
eigenvalue $\exp(2\pi\sqrt{-1} \lambda)$ of $G_z$.

\section{Chern classes of parabolic vector bundles}

\subsection{Projective bundle and the tautological line bundle}

Let $E_*$ be a parabolic vector bundle over $X$ of rank $r$. Let
\begin{equation}\label{ee}
\varphi\, :\, E_{\text{GL}(r, {\mathbb C})}\,\longrightarrow\, X
\end{equation}
be the corresponding ramified principal
$\text{GL}(r, {\mathbb C})$--bundle. Let ${\mathbb P}^{r-1}$ be
the projective space parametrizing the
hyperplanes in ${\mathbb C}^r$. The standard
action of $\text{GL}(r, {\mathbb C})$ on ${\mathbb C}^r$ produces
an action of $\text{GL}(r, {\mathbb C})$ on ${\mathbb P}^{r-1}$.
Let
\begin{equation}\label{rho}
\rho\, :\, \text{GL}(r, {\mathbb C})\, \longrightarrow\,
\text{Aut}({\mathbb P}^{r-1})
\end{equation}
be the homomorphism defined by this action. Let 
\begin{equation}\label{e7}
{\mathbb P}(E_*) \, = \, E_{\text{GL}(r, {\mathbb C})}({\mathbb P}^{r-1})
\, :=\, E_{\text{GL}(r, {\mathbb C})}\times^{\text{GL}(r,
{\mathbb C})} {\mathbb P}^{r-1}\,\longrightarrow\, X
\end{equation}
be the associated (ramified) fiber bundle. We note that
${\mathbb P}(E_*)$ is a quotient of $E_{\text{GL}(r, {\mathbb 
C})}\times{\mathbb P}^{r-1}$; two points $(y_1\, ,z_1)$ and
$(y_2\, ,z_2)$ of $E_{\text{GL}(r, {\mathbb
C})}\times{\mathbb P}^{r-1}$ are identified in ${\mathbb P}(E_*)$
if there is an element $g\, \in\, \text{GL}(r, {\mathbb C})$ such that
$y_2\,=\, y_1g$ and $z_2\,=\, \rho(g^{-1})(z_1)$, where $\rho$ is
the homomorphism in \eqref{rho}.

\begin{definition}\label{def1}
{\rm We will call ${\mathbb P}(E_*)$ the} projective bundle {\rm
associated to the parabolic vector bundle $E_*$.}
\end{definition}

Take a point $x\,\in\,D$; it need not be a smooth point of $D$.
Take any $z\,\in\, \varphi^{-1}(x)$, where $\varphi$
is the morphism in \eqref{ee}. As in \eqref{e8}, let
$G_z\,\subset\, \text{GL}(r, {\mathbb C})$
be the isotropy subgroup of $z$
for the action of $\text{GL}(r, {\mathbb C})$ on $E_{\text{GL}(r, 
{\mathbb C})}$. We recall that $G_z$ is a finite group. Let $n_x$
be the order of $G_z$; we note that $n_x$ is independent of
the choice of $z$ in $\varphi^{-1}(x)$ because for any
other $z'\, \in\, \varphi^{-1}(x)$, the two groups $G_{z'}$
and $G_z$ are isomorphic. The number of
distinct integers $n_x$, $x\,\in\,D$, is finite.
Indeed, this follows immediately from the fact that as $x$ moves over a 
fixed connected component of $D^{\rm sm}$, the conjugacy
class of the subgroup $G_z\, \subset\, \text{GL}(r, {\mathbb C})$,
$z\, \in\, \varphi^{-1}(x)$, remains unchanged.

Let
\begin{equation}\label{e9}
N(E_*)\, :=\, \text{l.c.m.}\{n_x\}_{x\in D}
\end{equation}
be the least common multiple of all these integers $n_x$.

As before, ${\mathbb P}^{r-1}$ is the projective
space parametrizing the hyperplanes in ${\mathbb C}^r$.
For any point $y\,\in\, {\mathbb P}^{r-1}$, let
\begin{equation}\label{e10}
H_y\,\subset\, \text{GL}(r, {\mathbb C})
\end{equation}
be the isotropy subgroup for the action of $\text{GL}(r, {\mathbb C})$
on ${\mathbb P}^{r-1}$ constructed using $\rho$ in \eqref{rho}. So
$H_y$ is a maximal parabolic subgroup of $\text{GL}(r, {\mathbb C})$.
Let ${\mathcal O}_{{\mathbb P}^{r-1}}(1)\,\longrightarrow \,
{\mathbb P}^{r-1}$ be the tautological quotient line bundle. The group
$H_y$ in \eqref{e10} acts on the fiber
${\mathcal O}_{{\mathbb P}^{r-1}}(1)_y$ over the point $y$.

For points $x\, \in\, D$, $z\, \in\, \varphi^{-1}(x)$ and
$y\,\in\, {\mathbb P}^{r-1}$, the group $G_z\bigcap H_y\,\subset\, 
\text{GL}(r, {\mathbb C})$, where $G_z$ and $H_y$ are defined in 
\eqref{e8} and \eqref{e10} respectively, acts trivially on the fiber
${\mathcal O}_{{\mathbb P}^{r-1}}(n_x)_y$ of the line bundle
${\mathcal O}_{{\mathbb P}^{r-1}}(n_x)\,:=\,
{\mathcal O}_{{\mathbb P}^{r-1}}(1)^{\otimes n_x}$
over $y$. Indeed, this
follows from the fact that $n_x$ is the order of $G_z$. 
Therefore, from the definition of $N(E_*)$ in \eqref{e9} it follows
immediately that for any
$z\,\in\, \varphi^{-1}(D)$ and any $y\,\in\, {\mathbb P}^{r-1}$,
the group $G_z\bigcap H_y\,\subset\, \text{GL}(r, {\mathbb C})$
acts trivially on the fiber of the line bundle
$${\mathcal O}_{{\mathbb P}^{r-1}}(N(E_*))\, :=\,
{\mathcal O}_{{\mathbb P}^{r-1}}(1)^{\otimes N(E_*)}
$$
over the point $y$.

Consider the action of $\text{GL}(r, {\mathbb C})$ on the total space
of the line bundle ${\mathcal O}_{{\mathbb P}^{r-1}}
(N(E_*))$ constructed using the standard action of
$\text{GL}(r, {\mathbb C})$ on ${\mathbb C}^r$. Let
$$
E_{\text{GL}(r, {\mathbb C})}({\mathcal O}_{{\mathbb P}^{r-1}}
(N(E_*)))
\, :=\, E_{\text{GL}(r, {\mathbb C})}\times^{\text{GL}(r, {\mathbb C})}
{\mathcal O}_{{\mathbb P}^{r-1}} 
(N(E_*)) \,\longrightarrow\, X
$$
be the associated fiber bundle. Since the natural projection
$$
{\mathcal O}_{{\mathbb P}^{r-1}} (N(E_*))\,\longrightarrow\,
{\mathbb P}^{r-1}
$$
intertwines the actions of $\text{GL}(r, {\mathbb C})$ on
${\mathcal O}_{{\mathbb P}^{r-1}} (N(E_*))$ and ${\mathbb P}^{r-1}$,
this natural projection produces a projection
\begin{equation}\label{e11}
E_{\text{GL}(r, {\mathbb C})}({\mathcal O}_{{\mathbb P}^{r-1}}
(N(E_*))) \,\longrightarrow\, {\mathbb P}(E_*)
\end{equation}
between the associated bundles, where ${\mathbb P}(E_*)$ is
constructed in \eqref{e7}.

Using the above observation that $G_z\bigcap H_y$ acts trivially on
the fiber of ${\mathcal O}_{{\mathbb P}^{r-1}} (N(E_*))$ over $y$ it
follows immediately that the projection in \eqref{e11} makes
$E_{\text{GL}(r, {\mathbb C})}
({\mathcal O}_{{\mathbb P}^{r-1}} (N(E_*)))$
an algebraic line bundle over the variety ${\mathbb P}(E_*)$.

\begin{definition}\label{def2}
{\rm The line bundle $E_{\text{GL}(r, {\mathbb C})}({\mathcal O}_{
{\mathbb P}^{r-1}}(N(E_*)))\,\longrightarrow\, {\mathbb P}(E_*)$ will be
called the} tautological line bundle; {\rm this tautological line bundle
will be denoted by ${\mathcal O}_{{\mathbb P}(E_*)}(1)$.}
\end{definition}

\subsection{Chern class of the tautological line bundle}

For any nonnegative integer $i$, define the
rational Chow group $\text{CH}^i(X)_{\mathbb
Q}\,:=\, \text{CH}^i(X)\bigotimes_{\mathbb Z} \mathbb Q$.

Let $E_*$ be a parabolic vector bundle over $X$ of rank $r$.
The corresponding ramified principal
$\text{GL}(r, {\mathbb C})$--bundle over $X$ will be denoted by
$E_{\text{GL}(r, {\mathbb C})}$. Consider ${\mathbb P}(E_*)$
constructed as in \eqref{e7} from $E_{\text{GL}(r, {\mathbb C})}$. Let
$$
\psi\, :\, {\mathbb P}(E_*)\, \longrightarrow\, X
$$
be the natural projection. Let ${\mathcal O}_{{\mathbb P}(E_*)}(1)$
be the tautological line bundle over ${\mathbb P}(E_*)$ (see
Definition \ref{def2}).

\begin{theorem}\label{thm1}
For each integer $i\, \in\, [0\, ,r]$, there is a unique element
$$
\widetilde{C}_i(E_*)\, \in\, {\rm CH}^i(X)_{\mathbb Q}
$$
such that
\begin{equation}\label{b3}
\sum_{i=0}^r (-1)^i c_1({\mathcal O}_{{\mathbb P}(E_*)}
(1))^{r-i} \psi^*\widetilde{C}_i(E_*)
\, =\, 0
\end{equation}
with $\widetilde{C}_0(E_*)\, =\, 1/N(E_*)^r$, where $N(E_*)$ is
the integer in \eqref{e9}.
\end{theorem}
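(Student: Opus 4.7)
The strategy is to reduce \eqref{b3} to the classical Grothendieck relation by passing to a Galois cover on which the ramified principal bundle becomes ordinary. Namely, I will invoke the correspondence recalled above to fix a finite Galois cover $\gamma : Y \longrightarrow X$ with group $\Gamma := \mathrm{Gal}(\gamma)$ and a $\Gamma$-linearized vector bundle $F$ of rank $r$ on $Y$ satisfying $E_{\mathrm{GL}(r,\mathbb C)} = \Gamma \backslash F_{\mathrm{GL}(r,\mathbb C)}$. Taking associated $\mathbb P^{r-1}$-bundles, this yields $\mathbb P(E_*) = \Gamma \backslash \mathbb P(F)$, where $\pi : \mathbb P(F) \longrightarrow Y$ is the ordinary projective bundle parametrizing hyperplanes in the fibers of $F$; let $q : \mathbb P(F) \longrightarrow \mathbb P(E_*)$ denote the quotient map, so that $\psi \circ q = \gamma \circ \pi$. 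Because $\mathcal O_{\mathbb P(E_*)}(1)$ is built from $\mathcal O_{\mathbb P^{r-1}}(N(E_*))$ in Definition \ref{def2}, its pullback by $q$ is $\mathcal O_{\mathbb P(F)}(1)^{\otimes N(E_*)}$, giving the key numerical relation $q^* h = N(E_*)\, h_F$ with $h := c_1(\mathcal O_{\mathbb P(E_*)}(1))$ and $h_F := c_1(\mathcal O_{\mathbb P(F)}(1))$.

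Next I will produce the candidate classes. The maps $\gamma$ and $q$ are each a quotient by a finite group acting on a smooth variety, so the transfer relation $\gamma_* \gamma^* = |\Gamma|\cdot \mathrm{id}$ (and its analogue for $q$) shows that $\gamma^*$ and $q^*$ are injective on rational Chow and identify the target with the $\Gamma$-invariants of the source. Because $F$ is $\Gamma$-linearized, $\sigma^* F \simeq F$ for every $\sigma \in \Gamma$, hence $c_i(F) \in \mathrm{CH}^i(Y)_{\mathbb Q}^{\Gamma}$. I may therefore define $\widetilde C_i(E_*) \in \mathrm{CH}^i(X)_{\mathbb Q}$ to be the unique class with
$$
\gamma^* \widetilde C_i(E_*) \;=\; \frac{c_i(F)}{N(E_*)^{r-i}},
$$
and in particular $\widetilde C_0(E_*) = 1/N(E_*)^r$, as required.

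To verify \eqref{b3}, I will apply $q^*$: using $q^* h = N(E_*) h_F$ and $q^* \psi^* = \pi^* \gamma^*$, the left-hand side becomes
$$
\sum_{i=0}^{r}(-1)^i\, N(E_*)^{r-i}\, h_F^{r-i}\, \pi^*\!\left(\frac{c_i(F)}{N(E_*)^{r-i}}\right) \;=\; \sum_{i=0}^{r}(-1)^i\, h_F^{r-i}\, \pi^* c_i(F),
$$
which vanishes by Grothendieck's identity for $\pi : \mathbb P(F) \to Y$; by injectivity of $q^*$ the original sum vanishes in $\mathrm{CH}^*(\mathbb P(E_*))_{\mathbb Q}$. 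For uniqueness, given any second collection $D_i$ with $D_0 = 1/N(E_*)^r$ satisfying \eqref{b3}, pulling back the difference of the two relations by $q^*$ yields a polynomial in $h_F$ of degree at most $r-1$ with coefficients in $\pi^*\mathrm{CH}^*(Y)_{\mathbb Q}$ that is zero; the classical projective bundle formula for $\pi$ (basis $1, h_F, \ldots, h_F^{r-1}$ over $\mathrm{CH}^*(Y)_{\mathbb Q}$) then forces each coefficient to vanish, so $\gamma^*(\widetilde C_i - D_i) = 0$ for every $i$, whence $\widetilde C_i = D_i$ by the injectivity of $\gamma^*$.

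The main point at which one has to be careful is the bookkeeping with the integer $N(E_*)$: the $N(E_*)$-th tensor power entering the definition of $\mathcal O_{\mathbb P(E_*)}(1)$ is precisely what produces the unusual normalization $\widetilde C_0(E_*) = 1/N(E_*)^r$ and the denominators $N(E_*)^{r-i}$ in the formula for $\widetilde C_i$. Once these factors are tracked correctly in the identification $q^*\mathcal O_{\mathbb P(E_*)}(1) = \mathcal O_{\mathbb P(F)}(N(E_*))$, the theorem reduces transparently to Grothendieck's identity on $\mathbb P(F)$.
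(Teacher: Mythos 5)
Your proposal is correct and follows essentially the same route as the paper: pass to the Galois cover $Y$ with its $\Gamma$-linearized bundle, identify ${\mathbb P}(E_*)$ with $\Gamma\backslash{\mathbb P}(E')$ and $q^*{\mathcal O}_{{\mathbb P}(E_*)}(1)$ with ${\mathcal O}_{{\mathbb P}(E')}(N(E_*))$, define $\widetilde C_i(E_*)$ via the isomorphism $\gamma^*:{\rm CH}^i(X)_{\mathbb Q}\to({\rm CH}^i(Y)_{\mathbb Q})^\Gamma$ applied to $c_i(E')/N(E_*)^{r-i}$, and reduce to Grothendieck's identity on ${\mathbb P}(E')$. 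Your treatment of uniqueness via the projective bundle basis $1,h_F,\dots,h_F^{r-1}$ is in fact more explicit than the paper's.
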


\begin{proof}
Let $\gamma\, :\, Y\,\longrightarrow\, X$ be the covering in
\eqref{e1} (recall that it depends on $E_*$).
Let $E'\,\longrightarrow\, Y$ be the
$\Gamma$--linearized vector bundle over $Y$ corresponding to
$E_*$, where $\Gamma\,=\, \text{Gal}(\gamma)$ is the Galois 
group of $\gamma$. Let ${\mathbb P}(E')$ be the projective bundle
over $Y$ parametrizing the hyperplanes in the fibers of $E'$. The 
tautological line bundle over ${\mathbb P}(E')$ will be denoted by
${\mathcal O}_{{\mathbb P}(E')}(1)$.

The action of $\Gamma$ on $E'$ produces
an action of $\Gamma$ on ${\mathbb P}(E')$ lifting
the action of $\Gamma$ on $Y$. It can be seen that the
variety ${\mathbb P}(E_*)$ in \eqref{e7} is the quotient
\begin{equation}\label{g1}
\Gamma\backslash {\mathbb P}(E')\,=\,
{\mathbb P}(E_*)\, .
\end{equation}
Indeed, this follows immediately from the fact
that $\Gamma\backslash E'_{\text{GL}(r, {\mathbb C})}\,=\,
E_{\text{GL}(r, {\mathbb C})}$, where
$E_{\text{GL}(r, {\mathbb C})}$ is the ramified principal
$\text{GL}(r, {\mathbb C})$--bundle corresponding to
$E_*$, and $E'_{\text{GL}(r, {\mathbb C})}$ is the
principal $\text{GL}(r, {\mathbb C})$--bundle corresponding to $E'$.

For any point $y\, \in\, Y$, let $\Gamma_y\, \subset\, \Gamma$ be the
isotropy subgroup that fixes $y$ for the action of $\Gamma$ on $Y$.
The action of $\Gamma_y$ on the fiber of ${\mathcal O}_{{\mathbb P}
(E')}(N(E_*))\, :=\, {\mathcal O}_{{\mathbb P}(E')}(1)^{\otimes
N(E_*)}$ is trivial, where $N(E_*)$ is the integer in \eqref{e9}.
Indeed, this follows immediately from the construction of $E_*$
for $E'$. Therefore, the quotient $\Gamma\backslash 
{\mathcal O}_{{\mathbb P}(E')}(N(E_*))$ defines a line bundle over
$\Gamma\backslash{\mathbb P}(E')\,=\, {\mathbb P}(E_*)$. We have a
natural isomorphism of line bundles
\begin{equation}\label{g2}
\Gamma\backslash{\mathcal O}_{{\mathbb P}(E')}(N(E_*))\,=\,
{\mathcal O}_{{\mathbb P}(E_*)}(1)\, .
\end{equation}

Let
$$
\psi_{E'}\, :\, {\mathbb P}(E')\, \longrightarrow\, Y
$$
be the natural projection. For any $i\, \in\, [0\, ,r]$, let
$$
c_i(E')\,\in\,\text{CH}^i(Y)_{\mathbb Q}\,:=\,\text{CH}^i(Y)
\otimes_{\mathbb Z}{\mathbb Q}
$$
be the $i$--th Chern class of $E'$. We have
\begin{equation}\label{g3}
\sum_{i=0}^r \frac{(-1)^i}{N(E_*)^{r-i}} c_1({\mathcal O}_{{\mathbb 
P}(E')}(N(E_*)))^{r-i} \psi^*_{E'}c_i(E')
\,=\,\sum_{i=0}^r (-1)^i c_1({\mathcal O}_{{\mathbb P}(E')}
(1))^{r-i} \psi^*_{E'}c_i(E')\,=\, 0
\end{equation}
(see \cite[page 429]{Ha}). The identity in \eqref{g3} in
fact uniquely determines the Chern classes of $E'$
provided it is given that $c_0(E') \,=\,1$.

Since the vector bundle $E'$ is $\Gamma$--linearized, it
follows immediately that
\begin{equation}\label{g4}
c_i(E')\,\in\, (\text{CH}^i(Y)_{\mathbb Q})^\Gamma\, ,
\end{equation}
where $(\text{CH}^i(Y)_{\mathbb Q})^\Gamma$ is the invariant
part of $\text{CH}^i(Y)_{\mathbb Q}$ for the action of $\Gamma$
on it. We also know that the pullback homomorphism
$$
\gamma^*\, :\, \text{CH}^i(X)_{\mathbb Q}\,
\longrightarrow\,(\text{CH}^i(Y)_{\mathbb Q})^\Gamma
$$
is an isomorphism \cite[pages 20--21, Example 1.7.6]{Fu}.

{}From \eqref{g1} we have the quotient map
$$
\beta\, :\,{\mathbb P}(E')\,\longrightarrow \,
{\mathbb P}(E_*)
$$
for the action of $\Gamma$, and from \eqref{g2} it follows that
\begin{equation}\label{b1}
\beta^*{\mathcal O}_{{\mathbb P}(E_*)}(1)\,=\,
{\mathcal O}_{{\mathbb P}(E')}(N(E_*))\, .
\end{equation}
Hence
$$
\beta^*c_1({\mathcal O}_{{\mathbb P}(E_*)}(1))\,=\,
c_1({\mathcal O}_{{\mathbb P}(E')}(N(E_*)))\, .
$$
Therefore, from \eqref{g3} and \eqref{g4} we conclude
that for each $i\, \in\, [0\, ,r]$, there is an unique element
$\widetilde{C}_i\, \in\, \text{CH}^i(X)_{\mathbb Q}$
such that $\widetilde{C}_0(E_*)\, =\, 1/N(E_*)^r$ and
$$
\sum_{i=0}^r (-1)^i c_1({\mathcal O}_{{\mathbb P}(E_*)}
(1))^{r-i} \psi^*\widetilde{C}_i(E_*)
\, =\, 0\, .
$$
This completes the proof of the theorem.
\end{proof}

\begin{definition}\label{def4}
{\rm For any integer $i\, \in\, [0\, ,r]$, the}
$i$--th Chern class $c_i(E_*)$ {\rm of a parabolic vector
bundle $E_*$ is defined to be
$$
c_i(E_*)\, :=\, N(E_*)^{r-i}\cdot \widetilde{C}_i(E_*)\, \in\, 
\text{CH}^i(X)_{\mathbb Q}\, ,
$$
where $\widetilde{C}_i(E_*)$ is the class in Theorem \ref{thm1}.}
\end{definition}

\begin{corollary}\label{cor1}
Let $E_*$ be a parabolic vector bundle over $X$ of rank $r$.
Let $E'\,\longrightarrow\, Y$ be the corresponding
$\Gamma$--linearized vector bundle (see the proof of Theorem
\ref{thm1}). Then
$$
c_i(E')\, =\, \gamma^*c_i(E_*)
$$
for all $i$.
\end{corollary}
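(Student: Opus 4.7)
The plan is to pull back the defining identity \eqref{b3} of Theorem \ref{thm1} from $\mathbb{P}(E_*)$ to $\mathbb{P}(E')$ via the quotient map $\beta$, and then invoke the uniqueness of the classical Grothendieck relation used in \eqref{g3} to identify $\gamma^*c_i(E_*)$ with $c_i(E')$.

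First I would assemble the commutative diagram with vertical maps $\psi_{E'}\colon \mathbb{P}(E')\to Y$ and $\psi\colon \mathbb{P}(E_*)\to X$, and horizontal maps $\beta\colon \mathbb{P}(E')\to\mathbb{P}(E_*)$ and $\gamma\colon Y\to X$; the identity $\psi\circ\beta\,=\,\gamma\circ\psi_{E'}$ is immediate from \eqref{g1}, so in particular $\beta^*\psi^*\,=\,\psi_{E'}^*\gamma^*$ on rational Chow groups. Next, applying $\beta^*$ to \eqref{b3} and using \eqref{b1} (so that $\beta^*c_1(\mathcal{O}_{\mathbb{P}(E_*)}(1)) \,=\, N(E_*)\cdot c_1(\mathcal{O}_{\mathbb{P}(E')}(1))$) turns \eqref{b3} into the relation
$$
\sum_{i=0}^r (-1)^i\, c_1(\mathcal{O}_{\mathbb{P}(E')}(1))^{r-i}\,\psi_{E'}^*\bigl(N(E_*)^{r-i}\gamma^*\widetilde{C}_i(E_*)\bigr)\,=\, 0
$$
in $\text{CH}^*(\mathbb{P}(E'))_{\mathbb Q}$.

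Now I would compare this with the classical Grothendieck identity for the ordinary vector bundle $E'$ on $Y$, which in the rewritten form of \eqref{g3} reads
$$
\sum_{i=0}^r (-1)^i\, c_1(\mathcal{O}_{\mathbb{P}(E')}(1))^{r-i}\,\psi_{E'}^* c_i(E')\,=\, 0,
$$
and is known to determine the classes $c_i(E')$ uniquely once one fixes $c_0(E')\,=\,1$. The normalisations match: since $\widetilde{C}_0(E_*)\,=\,1/N(E_*)^r$, the coefficient of $c_1(\mathcal{O}_{\mathbb{P}(E')}(1))^r$ in the pulled-back identity is $N(E_*)^r\cdot (1/N(E_*)^r)\,=\,1\,=\,c_0(E')$. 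Invoking uniqueness then yields
$$
c_i(E')\,=\,N(E_*)^{r-i}\gamma^*\widetilde{C}_i(E_*)\,=\,\gamma^*\bigl(N(E_*)^{r-i}\widetilde{C}_i(E_*)\bigr)\,=\,\gamma^* c_i(E_*)
$$
by Definition \ref{def4}, which is the assertion.

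There is no real obstacle in this argument; essentially everything has been set up in the proof of Theorem \ref{thm1}. The only point that requires a moment's thought is the compatibility of leading coefficients and the fact that $\gamma^*$ is a ring homomorphism (so that $\gamma^*\widetilde{C}_0(E_*)\,=\,1/N(E_*)^r$), together with the observation that the uniqueness statement for the Grothendieck relation is valid after tensoring with $\mathbb Q$ and for any class $\alpha_0$ in place of $1$, applied to the common projective bundle formula on $\mathbb{P}(E')\to Y$.
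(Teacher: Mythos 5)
Your argument is correct and is essentially the paper's own proof spelled out in detail: the paper simply reads off the identity $\gamma^*\widetilde{C}_i(E_*)\,=\,c_i(E')/N(E_*)^{r-i}$ from the way \eqref{b3} was constructed out of \eqref{g3}, whereas you re-derive that identity by pulling \eqref{b3} back along $\beta$, using \eqref{b1} and the uniqueness in the Grothendieck relation for the genuine projective bundle ${\mathbb P}(E')\,\longrightarrow\, Y$. The ingredients and the conclusion are identical, so no further comment is needed.
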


\begin{proof}
{}From the construction of \eqref{b3} using \eqref{g3} it follows
immediately that $\gamma^* \widetilde{C}_i(E_*) \,=\, 
c_i(E')/N(E_*)^{r-i}$.
Therefore, the corollary follows from Definition \ref{def4}.
\end{proof}

Define the \textit{Chern polynomial} for $E_*$ to be
$$
c_t(E_*)\,=\, \sum_{i=0}^r c_i(E_*)t^i\, ,
$$
where $r\, =\, \text{rank}(E_*)$, and $t$ is a formal variable.
The \textit{Chern character} of $E_*$ is constructed from the
Chern classes of $E_*$ in the following way:
if $c_t(E_*)\,=\, \prod_{i=1}^r (1+\alpha_i t)$, then
$$
{\rm ch}(E_*)\,:=\, \sum_{j=1}^r\exp(\alpha_j)
\,\in\, \text{CH}^*(X)_{\mathbb Q}\, .
$$

\begin{proposition}\label{prop1}
Let $E_*$ and $F_*$ be parabolic vector bundles on $X$.
\begin{enumerate}
\item The Chern polynomial of the parabolic direct sum
$E_*\oplus F_*$ satisfies
the identity $c_t(E_*\oplus F_*)\,=\, c_{t}(E_*)\cdot c_{t}(F_*)$.

\item The Chern polynomial of the parabolic dual $E^*_*$ satisfies
the identity $c_t(E^*_*)\,=\, c_{-t}(E_*)$.

\item{} The Chern character of the parabolic tensor product
$E_*\otimes F_*$ satisfies the identity
${\rm ch}(E_*\otimes F_*)\,=\,{\rm ch}(E_*)\cdot{\rm ch}(E_*)$.
\end{enumerate}
\end{proposition}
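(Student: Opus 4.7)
The plan is to reduce each of the three identities to its classical counterpart for ordinary vector bundles on a Galois covering of $X$, by means of Corollary \ref{cor1}. First I would produce a single Galois covering $\gamma\,:\,Y\,\longrightarrow\,X$ that resolves the parabolic structures of both $E_*$ and $F_*$ simultaneously; such a cover can be obtained by normalizing an irreducible component of the fiber product of the two covers provided separately by \eqref{e1}, after which $\Gamma\,:=\,\text{Gal}(\gamma)$ acts on both resulting linearized vector bundles $E'$ and $F'$ on $Y$.

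The key preparatory step is to observe that, under the equivalence between parabolic vector bundles on $X$ and $\Gamma$-linearized vector bundles on $Y$ used in the proof of Theorem \ref{thm1}, the parabolic direct sum, dual and tensor product correspond to the ordinary direct sum $E'\oplus F'$, dual $(E')^*$ and tensor product $E'\otimes F'$ as $\Gamma$-linearized bundles on $Y$. This compatibility is built into the constructions in \cite{MY}, \cite{Bi2}, \cite{Yo}; combined with Corollary \ref{cor1} it yields
$$
\gamma^*c_i(E_*\oplus F_*)\,=\,c_i(E'\oplus F'),\qquad \gamma^*c_i(E_*^*)\,=\,c_i((E')^*)\, ,
$$
and similarly $\gamma^*c_i(E_*\otimes F_*)\,=\,c_i(E'\otimes F')$.

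Granted these compatibilities, the identities in (1), (2) and (3) pull back via $\gamma^*$ to the classical Whitney sum formula, to $c_t(V^*)\,=\,c_{-t}(V)$, and to the multiplicativity of the Chern character on the smooth projective variety $Y$, each of which is standard. Since $\gamma^*\,:\,\text{CH}^*(X)_{\mathbb Q}\,\longrightarrow\,(\text{CH}^*(Y)_{\mathbb Q})^\Gamma$ is an isomorphism by \cite[Example 1.7.6]{Fu}, the identities descend back to $X$. The main obstacle I anticipate is not the Chern class manipulation, which is routine once the setup is in place, but the verification that the parabolic operations correspond to the usual operations on the common $\Gamma$-linearized model $(E', F')$ over a single cover $Y$; this amounts to carefully unwinding the equivalence recalled in Section 2 and checking that the integers $N(E_*)$, $N(F_*)$ and their analogues for $E_*\oplus F_*$, $E_*^*$, $E_*\otimes F_*$ are all accommodated by the same $Y$.
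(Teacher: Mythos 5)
Your proposal is correct and follows essentially the same route as the paper: both reduce the identities to their classical counterparts for the $\Gamma$--linearized bundle on the covering $Y$, using the compatibility of the parabolic operations with the correspondence and Corollary \ref{cor1}, then descend via the isomorphism $\gamma^*$ onto the $\Gamma$--invariant Chow classes. Your explicit attention to constructing a single common cover for both $E_*$ and $F_*$ is a detail the paper's terse proof leaves implicit, but it is not a different argument.
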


\begin{proof}
The Chern classes of usual vector bundles satisfy the above
relations. The correspondence between the parabolic
vector bundles and the $\Gamma$--linearized vector bundles
takes the tensor product (respectively, direct sum) of any two 
$\Gamma$--linearized vector
bundles to the parabolic tensor product (respectively, parabolic
direct sum) of the corresponding parabolic vector bundles. Similarly,
the dual of a given parabolic vector bundle corresponds to the dual
of the $\Gamma$--linearized vector bundle corresponding to the
given parabolic vector bundle. In view of these facts, the
proposition follows form Corollary \ref{cor1}.
\end{proof}

\section{Comparison with equivariant Chern classes}

Let us recall the basic construction of equivariant intersection theory
as in \cite{EG}. Consider a smooth variety $Z$ equipped with an action 
of a finite group $G$. Let $V$ be a representation of $G$ such that 
there is an open subset $U$ of $V$ on which $G$
acts freely and the codimension of the complement $V-U$ is at
least $\dim Z - i$. Following Edidin and Graham we write 
$$
Z_G \,= \,(Z \times U )/G\, .
$$
The equivariant Chow groups are defined to be
$$
A^{i}_G(Z) \,=\, A^{i}(Z_G)\otimes {\mathbb Q}\, .
$$
It is shown in Proposition 1 of \cite{EG} that this 
definition does not depend on $V$ and $U$.

Consider a parabolic vector bundle $E_*$ on $X$.
Let $\gamma\, :\, Y\longrightarrow X$ be a Galois cover as in
the proof of Theorem \ref{thm1}. The Galois group of $\gamma$
will be denoted by $G$. Let $E'$ be the $G$--linearized vector
bundle on $Y$ associated to $E_*$.
The vector bundle $E'$ has equivariant Chern classes
$$
c_i(E') \,\in \, A^i_G(Y)\, .
$$

We have a diagram
$$
\xymatrix{
{\mathbb P}(E_*) \ar[d]^{\psi_X} & {\mathbb P}(E_*)_G = (\Pr(E_*) 
\times U)/G \ar[l]_(.7){\pi_X} \ar[d]^{\psi^G_X} 
& {\mathbb P}(E')_G \ar[l]_(.28){\beta^G} \ar[d]^{\psi_Y^G} \\
X & X_G = (X \times U)/G \ar[l]^(.7){p_X}& Y_G 
\ar[l]^(.3){f_G}
}
$$
Note that the morphisms $p_X$ and $\pi_X$ are flat. Further, the 
morphisms $\beta_G$ and $f_G$ are flat and proper. The scheme 
${\mathbb P}(E')_G$ is a projective bundle over $Y_G$ as the action of 
$G$ on 
$Y\times U$ is free; see also \cite[Lemma 1]{EG}. All these can be 
deduced by using the fact that the group $G$ acts freely on
$X\times U$ and $Y\times U$.

\begin{proposition}
We have the following relationship amongst Chern classes:
$$
p_X^*(c_i(E_*)) \,= \,f_{G,*}(c^G_i(E'))\, .
$$
\end{proposition}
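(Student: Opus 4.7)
The plan is to reduce the identity from equivariant Chow on $X$ to ordinary rational Chow on $X\times U$, where Corollary~\ref{cor1} applies directly. Let $q_X\colon X\times U\to X_G$ and $q_Y\colon Y\times U\to Y_G$ denote the étale quotient maps by the free diagonal $G$-action, and set $\tilde{\gamma} = \gamma\times\mathrm{id}_U\colon Y\times U\to X\times U$. Since $q_X$ is a finite étale cover, rational pullback along $q_X$ is injective, so the claim reduces to verifying the equality after applying $q_X^*$.

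The geometric input is that the square
$$
\xymatrix{
Y\times U \ar[r]^{q_Y} \ar[d]_{\tilde{\gamma}} & Y_G \ar[d]^{f_G} \\
X\times U \ar[r]^{q_X} & X_G
}
$$
is Cartesian; this follows from the free $G$-action on $U$, which selects a unique representative in each $G$-orbit of $Y\times U$ once a representative in $U$ is fixed. With $q_X$ flat and $f_G$ proper, flat base change gives
$$
q_X^*\,f_{G,*}\,c^G_i(E') \;=\; \tilde{\gamma}_*\,q_Y^*\,c^G_i(E').
$$

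Next, because $E'_G$ on $Y_G$ is the descent of the $G$-linearized bundle $\mathrm{pr}_Y^*E'$ along the étale torsor $q_Y$, one has $q_Y^* c^G_i(E') = \mathrm{pr}_Y^* c_i(E')$, where $\mathrm{pr}_Y\colon Y\times U\to Y$. Corollary~\ref{cor1} substitutes $c_i(E') = \gamma^* c_i(E_*)$, and flat base change for the Cartesian square with vertical arrows $\gamma,\tilde{\gamma}$ and horizontal arrows $\mathrm{pr}_Y,\mathrm{pr}_X$ rewrites the resulting class as $\tilde{\gamma}^*\mathrm{pr}_X^* c_i(E_*) = \tilde{\gamma}^* q_X^* p_X^* c_i(E_*)$. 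Applying the projection formula for $\tilde{\gamma}$ and then using the injectivity of $q_X^*$ delivers the required identity in $A^i(X_G)$.

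The main obstacle is twofold: first, to verify the Cartesian property of the square rigorously so that flat base change is legitimate, and second, to correctly identify $q_Y^*E'_G$ with $\mathrm{pr}_Y^*E'$ via the descent of the $G$-linearized bundle, which is what translates equivariant Chern classes on $Y$ into ordinary Chern classes on $Y\times U$. Once these are in place, the rest is a routine assembly of Corollary~\ref{cor1}, flat base change, and the projection formula.
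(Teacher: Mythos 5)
Your argument is correct in its intermediate steps but takes a genuinely different route from the paper's. The paper never leaves the Borel construction: it pulls the defining relation \eqref{b3} back along $\pi_X$ and $\beta^G$ to the honest projective bundle ${\mathbb P}(E')_G\,\longrightarrow\, Y_G$, uses $\beta^{G*}\pi_X^*{\mathcal O}_{{\mathbb P}(E_*)}(1)\,=\,{\mathcal O}_{{\mathbb P}(E')}(N(E_*))$ together with the uniqueness of the classes satisfying the Grothendieck relation on that projective bundle, and thereby establishes \eqref{l1}, namely $f_G^*p_X^*c_i(E_*)\,=\,c^G_i(E')$, directly --- in effect redoing the proof of Corollary \ref{cor1} equivariantly. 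You instead descend to the atlas: the square relating $Y\times U$, $X\times U$, $Y_G$, $X_G$ is indeed Cartesian because $G$ acts freely on $U$, flat base change applies since $f_G$ is proper and $q_X$ flat, and the identification $q_Y^*\bigl((E'\times U)/G\bigr)\,\cong\,{\rm pr}_Y^*E'$ reduces everything to the non-equivariant Corollary \ref{cor1} on $Y$, after which injectivity of $q_X^*$ on rational Chow groups finishes. Your route cleanly separates the equivariant formalism from the Chern-class input, and your justification of the Cartesian square and of the descent identification is more careful than anything in the paper's three-line proof.

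One caveat, which you share exactly with the paper: the final step does not deliver the identity as stated. Your chain gives $q_X^*f_{G,*}c^G_i(E')\,=\,\widetilde{\gamma}_*\widetilde{\gamma}^*q_X^*p_X^*c_i(E_*)$, and the projection formula evaluates the right-hand side as $\deg(\widetilde{\gamma})\cdot q_X^*p_X^*c_i(E_*)\,=\,|G|\cdot q_X^*p_X^*c_i(E_*)$, since $\widetilde{\gamma}\,=\,\gamma\times{\rm id}_U$ has degree $\deg(\gamma)\,=\,|G|$. So what you actually prove is $f_{G,*}(c^G_i(E'))\,=\,|G|\cdot p_X^*(c_i(E_*))$. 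The paper's own proof has the identical issue: passing from \eqref{l1} to the proposition via the projection formula produces $f_{G,*}f_G^*(p_X^*c_i(E_*))\,=\,\deg(f_G)\cdot p_X^*c_i(E_*)$ with $\deg(f_G)\,=\,|G|$. The factor of $|G|$ is therefore a defect of the statement (or of the normalization of $f_{G,*}$) rather than something you introduced; the precise content that both you and the paper genuinely establish is the pullback identity $f_G^*p_X^*(c_i(E_*))\,=\,c^G_i(E')$.
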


\begin{proof}
By the projection formula it suffices to show that
\begin{equation}\label{l1}
f_G^* p_X^*(c_i(E_*)) \,=\, c^G_i(E')\, .
\end{equation}
Flat pullback preserves intersection products so the equation
obtained by pulling back \eqref{b3} to
${\mathbb P}(E')$ remains valid. As was observed
in the proof of Theorem \ref{thm1} we have that
$$
\beta^{G*}\pi_X^*(\mathcal{O}_{{\mathbb P}(E_*)}(1)) \,=\, 
\mathcal{O}_{{\mathbb P}(E')}(N(E_*))
$$
(see \eqref{b1}). Now using Definition \ref{def4} it is
deduced that \eqref{l1} holds.
\end{proof}

%%%%%%%%%%%%%%%%%%%%%%%%%%%%%%%%%%%%%%%%%%%%%%%%%%%%%%%%%%%%%

\end{document}